\theoremstyle{plain}
\numberwithin{equation}{section}
\newcommand{\BC}{{\mathbb C}}\newcommand{\BD}{{\mathbb D}}
\newcommand{\BN}{{\mathbb N}}
\newcommand{\BT}{{\mathbb T}}
\newcommand{\cB}{{\mathcal B}}
\newcommand{\cF}{{\mathcal F}}
\newcommand{\cG}{{\mathcal G}}\newcommand{\cH}{{\mathcal H}}
\newcommand{\cK}{{\mathcal K}}\newcommand{\cL}{{\mathcal L}}
\newcommand{\cV}{{\mathcal V}}
\newcommand{\cW}{{\mathcal W}}\newcommand{\cX}{{\mathcal X}}
\newcommand{\cY}{{\mathcal Y}}\newcommand{\cZ}{{\mathcal Z}}
\newcommand{\fC}{{\mathfrak C}}
\newcommand{\fI}{{\mathfrak I}}
\newcommand{\wtilE}{\widetilde{E}}\newcommand{\wtilF}{\widetilde{F}}
\newcommand{\wtilH}{\widetilde{H}}
\newcommand{\wtilT}{\widetilde{T}}
\newcommand{\whatE}{\widehat{E}}
\newcommand{\whatU}{\widehat{U}}
\newcommand{\al}{\alpha}
\newcommand{\be}{\beta}
\newcommand{\De}{\Delta}
\newcommand{\la}{\lambda}
\newcommand{\im}{\textup{Im\,}}
\newcommand{\kr}{\textup{Ker\,}}
\newcommand{\mat}[2]{\ensuremath{\left[\begin{array}{#1}#2\end{array} \right]}}
\newcommand{\sbm}[1]{\left[\begin{smallmatrix} #1\end{smallmatrix}\right]}
\newcommand{\tu}[1]{\textup{#1}}
\newcommand{\wtil}[1]{{\widetilde{#1}}}
\newcommand{\what}[1]{{\widehat{#1}}}
\newcommand{\ands}{\quad\mbox{and}\quad}
\theoremstyle{plain}
\newtheorem{theorem}{Theorem}[section]
\newtheorem{corollary}[theorem]{Corollary}
\newtheorem{lemma}[theorem]{Lemma}
\newtheorem{proposition}[theorem]{Proposition}
\theoremstyle{definition}
\newtheorem{remark}[theorem]{Remark}
\newcommand{\Up}{J}
\newcommand{\Ind}{\textup{Ind}}
\begin{document}

\title{Equivalence after extension and Schur coupling coincide for inessential operators}

\author[S. ter Horst]{S. ter Horst}
\address{S. ter Horst, Department of Mathematics, Unit for BMI, North-West
University, Potchefstroom, 2531 South Africa}
\email{Sanne.TerHorst@nwu.ac.za}

\author[M. Messerschmidt]{M. Messerschmidt}
\address{M. Messerschmidt, Department of Mathematics and Applied Mathematics; University of Pretoria; Private bag X20 Hatfield; 0028 Pretoria; South Africa}
\email{mmesserschmidt@gmail.com}

\author[A.C.M. Ran]{A.C.M. Ran}
\address{A.C.M. Ran, Department of Mathematics, FEW, VU university Amsterdam, De Boelelaan 1081a, 1081 HV Amsterdam, The Netherlands and Unit for BMI, North-West~University, Potchefstroom, South Africa}
\email{a.c.m.ran@vu.nl}

\author[M. Roelands]{M. Roelands}
\address{M. Roelands, Department of Mathematics, Unit for BMI, North-West
University,
Potchefstroom, 2531 South Africa}
\email{mark.roelands@gmail.com}

\author[M. Wortel]{M. Wortel}
\address{M. Wortel, Department of Mathematics, Unit for BMI, North-West
University, Potchefstroom, 2531 South Africa
and DST-NRF Centre of Excellence in Mathematical and Statistical Sciences (CoE-MaSS).}
\email{marten.wortel@gmail.com}

\thanks{This work is based on the research supported in part by the National Research Foundation of South Africa (Grant Numbers 90670 and 93406). Messerschmidt's research was funded by The Claude Leon Foundation, registered in South Africa No.\ P436/63. Wortel's research was supported in part by the DST-NRF Centre of Excellence in Mathematical and Statistical Sciences (CoE-MaSS), grant reference number BA2017/260.}

\subjclass[2010]{Primary 47A62; Secondary 47B07, 47A53, 47B06}

\keywords{Equivalence after extension; Schur coupling; inessential operators;
compact operators; Fredholm operators}

\begin{abstract}
In recent years the coincidence of the operator relations equivalence after extension (EAE) and Schur coupling (SC) was settled for the Hilbert space case. For Banach space operators, it is known that SC implies EAE, but the converse implication is only known for special classes of operators, such as Fredholm operators with index zero and operators that can in norm be approximated by invertible operators. In this paper we prove that the implication EAE $\Rightarrow$ SC also holds for inessential Banach space operators.
The inessential operators were introduced as a generalization of the compact operators, and include, besides the compact operators, also the strictly singular and strictly co-singular operators; in fact they form the largest ideal such that the invertible elements in the associated quotient algebra coincide with (the equivalence classes of) the Fredholm operators.
\end{abstract}

\maketitle

\section{Introduction}
\setcounter{equation}{0}

Throughout this paper, let $U\in\cB(\cX)$ and $V\in\cB(\cY)$ be two Banach space operators. Here $\cB(\cV,\cW)$ stands for the Banach space of bounded linear operators from the Banach space $\cV$ into the Banach space $\cW$, abbreviated to $\cB(\cV)$ if $\cV=\cW$. The term operator will always mean bounded linear operator, and invertibility of an operator will imply that the inverse is bounded as well.

The operators $U$ and $V$ are said to be {\em equivalent after extension (EAE)} when there exist Banach spaces $\cX_0$ and $\cY_0$ such that $U\oplus I_{\cX_0}$ and $V\oplus I_{\cY_0}$ are equivalent, that is, when there exist invertible operators $E$ in $\cB(\cY\oplus\cY_0, \cX\oplus\cX_0)$ and $F$ in $\cB(\cX\oplus\cX_0,\cY\oplus\cY_0)$ such that
\begin{equation}\label{EAE}
\mat{cc}{U&0\\0&I_{\cX_0}}=E\mat{cc}{V&0\\0& I_{\cY_0}}F.
\end{equation}
We will encounter the special case
where $\cX_0=\{0\}$ or $\cY_0=\{0\}$, in which case we say that $U$ and $V$ are {\em equivalent after one-sided extension (EAOE)}. Furthermore, we say that $U$ and $V$ are {\em Schur coupled (SC)} whenever there exists an operator $M=\sbm{A&B\\C&D}\in\cB(\cX\oplus\cY,\cX\oplus\cY)$ with $A$ and $D$ invertible and
\[
U=A-BD^{-1}C \ands V=D-CA^{-1}B.
\]
Hence $U$ and $V$ are the Schur complements of the block operator matrix $M$ with respect to $D$ and $A$, respectively. We also recall that $U$ and $V$ are called {\em matricially coupled (MC)} whenever there exists an invertible operator $\whatU\in \cB(\cX\oplus\cY)$ such that
\[
\whatU=\mat{cc}{U & *\\ * & *}\ands \whatU^{-1}=\mat{cc}{* & *\\ * & V}.
\]
These operator relations originated in the study of certain Wiener-Hopf integral operators, in the seminal paper \cite{BGK84}, cf., \cite[Chapter XIII]{GGK90}, and have since found many other applications, for instance in the study of diffraction theory \cite{CDS14,S17}, truncated Toeplitz operators \cite{CP16,CParxiv} and unbounded operator functions \cite{ETarxiv}, to mention only a few recent references. Operator extensions into block operator matrices and their spectral theory were studied in detail in \cite{T08}. Schur complements have many applications, for instance in matrix function factorizations \cite{BGKR08}, see also \cite{Z05} for its role in matrix analysis and applications in statistics and numerical analysis.

One of the main features in \cite{BGK84} is the fact that MC implies EAE, cf.,
\cite[Section III.4]{GGK90}, which is used to derive Fredholm properties of the integral operators in question.
The converse implication, that EAE implies MC was proven in \cite{BT92a},
while in \cite{BT92b}, see also  \cite{BGKR05}, it was shown that SC implies both MC and EAE. Thus the operator relations EAE and MC coincide, and both are implied by SC. In \cite{BT94} the question was raised whether all three operator relation may be the same, hence, whether EAE=MC also implies SC. This matter has not been settled yet, though significant progress was made in recent years. It is for instance true that EAE=MC implies SC if SC is an equivalence relation \cite{BGKR05}, as is the case for EAE and MC, and affirmative answers have been obtained for specific classes of operators such as Fredholm operators with index 0 (and without the index condition for Hilbert space operators) \cite{BGKR05}, and more recently, for Banach space operators that can be approximated in norm by invertible operators, leading to an affirmative answer for operators on separable Hilbert spaces \cite{tHR13}, and for general Hilbert space operators in \cite{T14}.

A Banach space operator $T\in\cB(\cV,\cW)$ is called {\em inessential} if for any $S\in\cB(\cW,\cV)$ the operator $I-ST$ is Fredholm, or, equivalently, if for any $S\in\cB(\cW,\cV)$ the operator $I-TS$ is Fredholm, cf., \cite[Section 7.1]{A04}. We write $\fI(\cV,\cW)$ for the set of inessential operators in $\cB(\cV,\cW)$, abbreviated to $\fI(\cV)$ if $\cV=\cW$.

The following theorem is the main result of the present paper.

\begin{theorem}\label{T:main1}
Let $U\in\fI(\cX)$ and $V\in\fI(\cY)$. Then
\[
\mbox{$U$ and $V$ are EAE}\quad \Longleftrightarrow \quad
\mbox{$U$ and $V$ are EAOE}\quad \Longleftrightarrow \quad
\mbox{$U$ and $V$ are SC}.
\]
Moreover, in this case $U$ and $V$ are EAE and EAOE with extensions on finite dimensional Banach spaces.
\end{theorem}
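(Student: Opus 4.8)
The plan is to close the cycle by proving the two substantive implications EAE $\Rightarrow$ EAOE and EAOE $\Rightarrow$ SC, with the finite-dimensionality of the extension spaces produced along the way; the remaining links are already available, since SC $\Rightarrow$ EAE is known (from \cite{BT92b}, via MC) and EAOE $\Rightarrow$ EAE is just the special case $\cX_0=\{0\}$ or $\cY_0=\{0\}$ of the definition. Thus everything reduces to EAE $\Rightarrow$ EAOE $\Rightarrow$ SC for $U\in\fI(\cX)$, $V\in\fI(\cY)$.

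The first thing I would set up is the algebraic framework in which inessentiality can actually be exploited. Because $\fI(\cX)$ is precisely the ideal for which $T$ is Fredholm exactly when the class of $T$ is invertible in the quotient $\cB(\cX)/\fI(\cX)$, the class of the inessential operator $U$ vanishes there. Moreover $U\oplus 0_{\cX_0}$ is inessential on $\cX\oplus\cX_0$, so $U\oplus I_{\cX_0}$ has the same class modulo $\fI(\cX\oplus\cX_0)$ as the idempotent $0_\cX\oplus I_{\cX_0}$, and likewise $V\oplus I_{\cY_0}$ has the class of $0_\cY\oplus I_{\cY_0}$. The equivalence in \eqref{EAE}, namely $U\oplus I_{\cX_0}=E(V\oplus I_{\cY_0})F$ with $E,F$ invertible, then descends to an equivalence of these two idempotent classes. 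Equivalent idempotents in this quotient have isomorphic ranges, which amounts to a \emph{Fredholm} operator $S\colon\cX_0\to\cY_0$.

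The heart of the matter, and the step I expect to be the main obstacle, is converting this into a genuinely finite-dimensional extension. A Fredholm $S\colon\cX_0\to\cY_0$ has finite-dimensional kernel and cokernel, so $\cX_0\oplus\BC^{c}\cong\cY_0\oplus\BC^{k}$ with $k=\dim\kr S$ and $c=\dim\coker S$ both finite. Feeding this isomorphism back into \eqref{EAE} and cancelling the common infinite-dimensional summand, one is left with an equivalence after a one-sided extension by a finite-dimensional space, i.e.\ $U$ and $V$ are EAOE with $\dim\cZ<\infty$. The difficulty is that $\cX_0,\cY_0$ are a priori infinite-dimensional and only the Fredholm-perturbation property of inessential operators is available to control them; the delicate point is to realise the cancellation at the level of the operators, not merely of the underlying spaces, so that the surviving discrepancy is truly finite-dimensional rather than finite-dimensional only ``modulo $\fI$''. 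This is exactly where the hypothesis $U,V\in\fI$ is indispensable: for general operators the quotient class need not vanish and no such reduction is forced.

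Finally I would upgrade finite-dimensional EAOE to SC. Given $U$ equivalent to $V\oplus I_\cZ$ with $\dim\cZ<\infty$, the pair $(U,\,U\oplus I_\cZ)$ can be Schur coupled by an explicit block operator with invertible diagonal that interchanges the finite-dimensional summand $\cZ$ — the same rotation mechanism that handles Fredholm operators of index zero in \cite{BGKR05}, where a finite-dimensional kernel and cokernel are matched — while $U\oplus I_\cZ$ is genuinely equivalent to $V$, which is itself readily exhibited as a Schur coupling. The finite-dimensionality of $\cZ$ is precisely what makes these explicit couplings constructible and concatenable into a single $M=\sbm{A&B\\ C&D}$ on $\cX\oplus\cY$ with $A,D$ invertible and Schur complements $U$ and $V$. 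Tracking the spaces through the construction keeps every extension finite-dimensional, which yields the closing assertion of the theorem.
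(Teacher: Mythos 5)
Your outline is right up to the point where you derive, from inessentiality and the quotient modulo $\fI$, a Fredholm operator $S\colon\cX_0\to\cY_0$; this matches what the paper actually proves (in its normalization $\cX_0=\cY$, $\cY_0=\cX$ from Corollary 4.2 of \cite{tHR13}, Lemma \ref{L:Fred} shows the specific corner operators $E_{11}$ and $F_{22}$ of $E$ and $F$ are Fredholm). But the next step is a genuine gap, and it is exactly the hard part of the theorem. From the equivalence $U\oplus I_{\cX_0}=E(V\oplus I_{\cY_0})F$ together with the space isomorphism $\cX_0\oplus\BC^{c}\cong\cY_0\oplus\BC^{k}$ you cannot ``cancel the common infinite-dimensional summand'': equivalence after extension is not known to satisfy any cancellation law, and identity summands on isomorphic spaces do not strip off from an operator equivalence. (If such cancellation were available, EAE with \emph{any} extension would immediately reduce to EAE with finite-dimensional extension whenever the extension spaces are Fredholm-related, and most of the EAE-versus-SC problem for this class would be trivial.) You flag this yourself -- ``the delicate point is to realise the cancellation at the level of the operators'' -- but then simply assert the conclusion; no mechanism is given, and the abstract existence of $S$ is too weak to supply one, because nothing ties $S$ to the operators $E,F$ implementing the equivalence.

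The paper's route avoids cancellation entirely, and this is where its real work lies. It keeps the concrete normalized forms of $E$, $F$ and their inverses (Lemma \ref{L:EFform}), so that Fredholmness is obtained not for some abstract $S$ but for the corners $E_{11},F_{22}$ satisfying the intertwining identities \eqref{EAEIds}, e.g.\ $-F_{22}F_{11}=I-\whatE_{21}U$. These identities are then used to triangularize $U$ and $V$ along the kernel/range decompositions of $E_{11},F_{22}$ (Lemma \ref{L:UVform}), to correct $E,F$ by a Moore--Penrose term so that $E_{21}=P_{\kr F_{22}}E_{21}$ and $F_{21}=P_{\cH_2}$ (Lemma \ref{L:EFadj}), and finally to prove the corner operators $U_{22},V_{22}$ are invertible, which yields an \emph{explicit} new equivalence $U\oplus I_{\kr E_{11}}\sim V\oplus I_{\cH_2}$ with finite-dimensional extensions (Proposition \ref{P:ComplRed}); the one-sided reduction then uses an explicit embedding of the smaller of $\kr E_{11}$, $\cH_2$ into the larger. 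Your closing step, EAOE $\Rightarrow$ SC, is also not re-proved in the paper but imported from \cite{tHR13}, so your sketch of a ``rotation mechanism'' \`a la \cite{BGKR05} is unnecessary, though harmless. To repair your argument you would essentially have to reconstruct Section \ref{S:comp} of the paper: replace the abstract idempotent/cancellation step by a construction that transports the Fredholm data of the specific corners of $E$ and $F$ into a new equivalence of $U$ and $V$.
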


The inessential operators were introduced by Kleinecke in \cite{K63} as a generalization of the compact operators. In fact, the set $\fI(\cV)$ of inessential operators in $\cB(\cV)$ forms the largest (two-sided) closed ideal in $\cB(\cV)$ for which the invertible elements in the quotient $\cB(\cV)/\fI(\cV)$ coincide with (the equivalence classes of) the Fredholm operators in $\cB(\cV)$, as observed by Schlechter \cite{S68}. Hence, besides the compact operators it also includes the strictly singular and strictly co-singular operators on $\cV$; see \cite[Section 7.1]{A04} for further details and references. By the Pitt-Rosenthal Theorem, for $1\leq p<q<\infty$ all operators in $\cB(\ell^q,\ell^p)$ are compact; this is not the case for $\cB(\ell^p,\ell^q)$, however all operators in $\cB(\ell^p,\ell^q)$ are strictly singular, c.f., \cite{LT77}. Also, strictly singular operators can have non-separable ranges \cite{GT63}.

Although the inessential operators typically form a small class of operators, there are `exotic' Banach spaces for which the above result together with the observation from \cite{BGKR05} that EAE implies SC for Fredholm operators with index zero provides a confirmative answer to the question whether EAE implies SC. These spaces are referred to as Banach spaces with (very) few operators, c.f., \cite{M03} and the references therein. An infinite dimensional Banach space $\cV$ is said to have {\em few operators} if any operator is of the form $\la I_\cV + S$ with $\la$ a scalar and $S$ strictly singular, while $\cV$ has {\em very few} operators if one replaces strictly singular by compact. In both cases, the Calkin algebra has dimension 1. Although all `classical' Banach spaces are not of these types, all hereditarily indecomposable Banach spaces have few operators \cite{GM93}, and there are examples which are reflexive, separable and have other reasonable Banach space properties, c.f., \cite[Subsection 6.3]{G07}. A construction of a Banach space with very few operators was first given in 2011 by Argyros and Heydon \cite{AH11}. Such spaces are relevant to the invariant subspace problem, since each operator on it has a nontrivial invariant subspace; that this is in general not the case for Banach spaces with few operators was shown by Read  \cite{R99}.

For Hilbert space operators, as well as many Banach spaces, inessential operators are also compact. See \cite[Section 6]{T14} for comments on the implications EAE $\Rightarrow$ EAOE $\Rightarrow$ SC for the case of compact Hilbert operators, including the observation that the extension in EAOE can be taken finite dimensional. The proofs in \cite{T14}, for the general Hilbert space case, rely strongly on the use of spectral resolutions, and do not carry over to the Banach space case. In fact, while in the Hilbert space case the proof of EAE $\Rightarrow$ SC goes via EAOE, as in the current paper, Proposition 4.3 in \cite{tHMR15}, shows SC cannot imply EAOE in general, since invertible operators $U\in\cB(\ell^p)$ and $V\in\cB(\ell^q)$, for $1\leq p\neq q<\infty$, are SC but cannot be EAOE.

\begin{remark}
The results of in the final section of \cite{tHMR15} (i.e., Theorems 5.2, 5.3, Corollaries 5.4, 5.7 and Proposition 5.6) were proved using the notion of essentially incomparable Banach spaces assuming one of the operators, $U$ or $V$, is compact. In hindsight we note that all these results can easily be extended to the case where one of the operators in inessential, and, in particular, they remain true if one of the operators is strictly singular or strictly co-singular. Indeed, the proofs of these results from \cite{tHMR15} rely purely on the fact that the invertible elements in the Calkin algebra of the compact operators correspond to the Fredholm operators, which is also true if the compact operators are replaced by the inessential operators.
\end{remark}

As indicated above, to prove Theorem \ref{T:main1} it remains to show that EAE of $U$ and $V$ implies they are also EAOE. A large portion of the proof can be conducted at a slightly more general level, namely in the case when the operators $E$ and $F$ that establish the EAE of $U$ and $V$ (in the special form of \cite[Corollary 4.2]{tHR13}) have a left upper corner, $E_{11}$, and right lower corner, $F_{22}$, respectively, that have complementable kernels and ranges, i.e., they are relatively regular \cite[Theorem 7.2]{A04}. This special case will be considered in Section \ref{S:comp}. In Section \ref{S:ProofMain} we return to the case that $U$ and $V$ are inessential and show that in that case the operators $E_{11}$ and $F_{22}$ are Fredholm, leading to a proof of the main result. In the final section we discuss an application of a result from \cite{tHMR15} to multiplication operators on $L^p$-spaces.

We conclude this introduction with a few words about notation and terminology. Most of the notation is standard, or already explained above. For a complementable subspace $\cF$ of a Banach space $\cZ$, say with complement $\cG$, we write $P_\cF$ for the projection onto $\cF$ along $\cG$ viewed as an operator on $\cZ$. The complementary space is not include in the notation, since for all the projections in this paper it will be clear what the complement in question is. Occasionally we want to view the projection as mapping into $\cF$, in which case we use the notation $\Pi_\cF$, hence $P_\cF\in\cB(\cZ,\cZ)$, while $\Pi_\cF\in\cB(\cZ,\cF)$. The embedding of $\cF$ into $\cH$ will be denoted by $\Up_\cF\in\cB(\cF,\cZ)$.

\section{The case that $E_{11}$ and $F_{22}$ have complementable kernels and ranges.}
\label{S:comp}

By Corollary 4.2 of \cite{tHR13}, if $U$ and $V$ are EAE one may without loss of generality assume $\cX_0=\cY$ and $\cY_0=\cX$ with the right upper corners of $F$ and $F^{-1}$ equal to $I_\cY$ and $I_\cX$, respectively. The following lemma provides an extension of this result.

\begin{lemma}\label{L:EFform}
Assume that $U\in\cB(\cX)$ and $V\in\cB(\cY)$ are EAE. Then we may assume \eqref{EAE} holds with $\cX_0=\cY$, $\cY_0=\cX$ and with $F$ and $E$ and their inverses of the following operator block form
\begin{equation}\label{EFform1}
\begin{aligned}
F=\mat{cc}{F_{11}& I_\cY\\ F_{21} & F_{22}},&\quad
E=\mat{cc}{E_{11} & U\\ E_{21} & -F_{11}}\\
F^{-1}=\mat{cc}{-F_{22}& I_\cX\\ I+F_{11}F_{22} & -F_{11}},&\quad
E^{-1}=\mat{cc}{\what{E}_{11} & V\\ \what{E}_{21} & F_{22}}.
\end{aligned}
\end{equation}
Moreover, in this case the following identities hold:
\begin{align}
&(i)\ I=F_{21} - F_{22}F_{11},
\quad (ii)\ U=E_{11}VF_{11}+UF_{21},
\quad (iii)\ E_{21}VF_{11}=F_{11}F_{21},\notag\\
&(iv)\ E_{11}V=-UF_{22},
\quad (v)\ F_{11}F_{22}=E_{21}V-I,
\quad (vi)\ \whatE_{11}U=V F_{11}, \label{EAEIds} \\
& (vii)\ \whatE_{21}U=F_{21},
\quad (viii)\ E_{11}\whatE_{11}=I-U \whatE_{21},
\quad (ix)\ E_{21}\whatE_{11}=F_{11}\whatE_{21},\notag\\
& (x)\ \whatE_{11}E_{11}=I-VE_{21},
\quad (xi)\ \whatE_{21} E_{11}=-F_{22} E_{21}.\notag
\end{align}
\end{lemma}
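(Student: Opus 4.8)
The plan is to read off every assertion from the five relations $FF^{-1}=I$, $F^{-1}F=I$, $EE^{-1}=I$, $E^{-1}E=I$ and the $\textup{EAE}$ identity itself, once the normalization of \cite[Corollary 4.2]{tHR13} is in force. So I start by taking $\cX_0=\cY$, $\cY_0=\cX$, writing $D_U=\sbm{U&0\\0&I_\cY}$ and $D_V=\sbm{V&0\\0&I_\cX}$, so that \eqref{EAE} reads $D_U=ED_VF$ with $E\in\cB(\cY\oplus\cX,\cX\oplus\cY)$ and $F\in\cB(\cX\oplus\cY,\cY\oplus\cX)$ invertible, and with the $(1,2)$-corners of $F$ and $F^{-1}$ normalized to $I_\cY$ and $I_\cX$. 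I record the block forms $F=\sbm{F_{11}&I_\cY\\F_{21}&F_{22}}$, $F^{-1}=\sbm{G_{11}&I_\cX\\G_{21}&G_{22}}$, $E=\sbm{E_{11}&E_{12}\\E_{21}&E_{22}}$, $E^{-1}=\sbm{H_{11}&H_{12}\\H_{21}&H_{22}}$, all corners a priori unknown except the two normalized ones, keeping track of which of $\cX,\cY$ is the domain and codomain of each corner.

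First I would pin down $F^{-1}$ and identity (i). Expanding $FF^{-1}=I_{\cY\oplus\cX}$, the $(1,2)$-entry forces $G_{22}=-F_{11}$ and the $(2,2)$-entry gives $F_{21}-F_{22}F_{11}=I$, which is (i); expanding $F^{-1}F=I_{\cX\oplus\cY}$, the $(1,2)$- and $(2,2)$-entries give $G_{11}=-F_{22}$ and $G_{21}=I+F_{11}F_{22}$. The remaining $(1,1)$- and $(2,1)$-entries of both products are then automatically satisfied using (i), so this produces exactly the stated form of $F^{-1}$ in \eqref{EFform1}.

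Next I would extract the block structure of $E$ and $E^{-1}$, together with (iv)--(vii), by rearranging $D_U=ED_VF$ two ways. Right-multiplying by $F^{-1}$ gives $D_UF^{-1}=ED_V$; comparing the four entries (using the explicit $F^{-1}$) yields $E_{12}=U$ and $E_{22}=-F_{11}$, together with (iv) $E_{11}V=-UF_{22}$ and (v) $F_{11}F_{22}=E_{21}V-I$. Left-multiplying $D_U=ED_VF$ by $E^{-1}$ gives $E^{-1}D_U=D_VF$; comparing entries yields $H_{12}=V$ and $H_{22}=F_{22}$, together with (vi) $H_{11}U=VF_{11}$ and (vii) $H_{21}U=F_{21}$. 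Setting $\whatE_{11}:=H_{11}$ and $\whatE_{21}:=H_{21}$ then gives precisely the block forms of $E$ and $E^{-1}$ in \eqref{EFform1} and identities (iv)--(vii).

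Finally I would harvest the remaining identities. With the blocks now explicit, $EE^{-1}=I_{\cX\oplus\cY}$ yields (viii) from the $(1,1)$-entry and (ix) from the $(2,1)$-entry, while its $(1,2)$- and $(2,2)$-entries reproduce (iv) and (v); likewise $E^{-1}E=I_{\cY\oplus\cX}$ yields (x) from the $(1,1)$-entry and (xi) from the $(2,1)$-entry, its $(1,2)$- and $(2,2)$-entries reproducing (vi) and (vii) (the latter via (i)). Identities (ii) and (iii) come straight from $D_U=ED_VF$: since $ED_V=\sbm{E_{11}V&U\\E_{21}V&-F_{11}}$, equating the $(1,1)$- and $(2,1)$-entries of $ED_VF$ with those of $D_U$ gives $U=E_{11}VF_{11}+UF_{21}$ and $E_{21}VF_{11}=F_{11}F_{21}$. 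Every step is block-matrix multiplication, so the only genuine care required—and the one place sign or typing errors would creep in—is the consistent tracking of which space is the domain and codomain of each corner, ensuring that all products compose and that the symbols $I$, $I_\cX$, $I_\cY$ land on the correct spaces; there is no analytic obstacle.
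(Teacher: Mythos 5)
Your proposal is correct and follows essentially the same route as the paper: invoke \cite[Corollary 4.2]{tHR13} for the normalization $\cX_0=\cY$, $\cY_0=\cX$ with the right upper corners of $F$ and $F^{-1}$ equal to $I_\cY$ and $I_\cX$, and then read off all block forms and identities (i)--(xi) from $D_U=ED_VF$, its rearrangements $D_UF^{-1}=ED_V$ and $E^{-1}D_U=D_VF$, and the products $FF^{-1}=F^{-1}F=I$, $EE^{-1}=E^{-1}E=I$. The only cosmetic differences are that you derive the corners $E_{12}=U$ and $\whatE_{12}=V$ from the algebra rather than taking them from the cited corollary as the paper's proof does, and you obtain the form of $F^{-1}$ by direct entry comparison instead of the paper's Schur-complement inversion formula with $\Delta=F_{21}-F_{22}F_{11}$ forced to satisfy $\Delta^{-1}=I_\cX$ --- both computations are equivalent and your bookkeeping of domains, codomains and signs checks out.
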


\begin{proof}[\bf Proof]
By Corollary 4.2 in \cite{tHR13} the identity \eqref{EAE} holds with $\cX_0=\cY$, $\cY_0=\cX$ and with the right upper corners of $F$, $E$, $F^{-1}$ and $E^{-1}$ as in \eqref{EFform1}. Hence $F$ is of the form as in \eqref{EFform1}. Next we derive the formula for $F^{-1}$. Since $F$ is invertible, as well as the right upper corner $I_\cY$ of $F$, the Schur complement with respect to $I_\cY$, namely $\De=F_{21}-F_{22}F_{11}$ is also invertible, and we can employ the standard inversion formula, cf., \cite[pp.\ 28--29]{BGKR08}, to find that
\[
F^{-1}=\mat{cc}{-\De^{-1}F_{22} & \De^{-1}\\ I+F_{11}\De^{-1}F_{22} & -F_{11}\De^{-1}}.
\]
However, the right upper corner $\De^{-1}$ of $F^{-1}$ is equal to $I_\cX$. Inserting $\De^{-1}=I_\cX$ in the formula for $F^{-1}$ gives the formula for $F^{-1}$ in \eqref{EFform1}. Moreover, the identity $\De=I_\cX$ gives identity (i) in \eqref{EAEIds}. We have
\begin{equation}\label{EAEmatids}
\begin{aligned}
&\mat{cc}{U&0\\0&I_{\cY}}=E\mat{cc}{V&0\\0& I_{\cX}}F,\quad
\mat{cc}{U&0\\0&I_{\cY}}F^{-1}=E\mat{cc}{V&0\\0& I_{\cX}},\\
&\qquad\qquad\qquad\qquad E^{-1}\mat{cc}{U&0\\0&I_{\cY}}=\mat{cc}{V&0\\0& I_{\cX}}F.
\end{aligned}
\end{equation}
That the right lower corners of $E$ and $E^{-1}$ are given by $-F_{11}$ and $F_{22}$, respectively, follows from an inspection of the right lower corners of the last two identities in \eqref{EAEmatids}. The first identity of \eqref{EAEmatids} gives identities (ii)--(v) of \eqref{EAEIds}, while the third identity of \eqref{EAEmatids} gives identities (vi) and (vii) of \eqref{EAEIds}. Finally, writing out $E E^{-1}=I$ gives identities (viii) and (ix) of \eqref{EAEIds} and $E^{-1}E=I$ gives identities (x) and (xi) of \eqref{EAEIds}.
\end{proof}

For the remainder of this section we will assume that the operators $E_{11}$ and $F_{22}$ in the decompositions of $E$ and $F$ in \eqref{EFform1} have complementable kernels and ranges, in particular, their ranges are assumed to be closed. In this case $E_{11}$ and $F_{22}$ have decompositions of the following form:
\begin{equation}\label{E11F22Dec}
\begin{aligned}
& F_{22}=\mat{cc}{F_{22}'&0\\0&0}\colon\mat{c}{\cK_{2}\\ \kr F_{22}}\to\mat{c}{\im F_{22}\\ \cH_2},\\
& E_{11}=\mat{cc}{E_{11}'&0\\0&0}\colon\mat{c}{\cF_{1}\\ \kr E_{11}}\to\mat{c}{\im E_{11}\\ \cG_1}.
\end{aligned}
\end{equation}
Moreover, since $E_{11}$ and $F_{22}$ have closed range, $E_{11}'$ and $F_{22}'$ are bounded bijections, and hence they are invertible operators.

The next proposition is the main result of the present section.

\begin{proposition}\label{P:ComplRed}
Let $U\in\cB(\cX)$ and $V\in\cB(\cY)$ be Banach space operators that are EAE. Let $E$ and $F$ be as in Lemma \ref{L:EFform} and assume $E_{11}$ and $F_{22}$ have complementable kernels and ranges, so that they admit decompositions as in \eqref{E11F22Dec}. Then $U$ and $V$ are also EAE with $\cX_0=\kr E_{11}$ and $\cY_0=\cH_2$. Moreover, if $\kr E_{11}$ is isomorphic to a complemented subspace of $\cH_2$ or $\cH_2$ is isomorphic to a complemented subspace of $\kr E_{11}$, then $U$ and $V$ are EAOE and hence SC.
\end{proposition}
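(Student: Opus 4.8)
The plan is to perform surgery on the large equivalence supplied by Lemma \ref{L:EFform} and then exploit the complementation hypothesis to pass from a two-sided to a one-sided extension. Starting from
\[
\mat{cc}{U&0\\0&I_{\cY}}=E\mat{cc}{V&0\\0& I_{\cX}}F,
\]
I split the extension identities using \eqref{E11F22Dec}: $I_\cY=I_{\cF_1}\oplus I_{\kr E_{11}}$ and $I_\cX=I_{\im F_{22}}\oplus I_{\cH_2}$. The superfluous summands are $I_{\cF_1}$ (on the $U$-side) and $I_{\im F_{22}}$ (on the $V$-side), and the invertible corners $E_{11}'\colon\cF_1\to\im E_{11}$ and $F_{22}'\colon\cK_2\to\im F_{22}$ are precisely the isomorphisms that absorb them. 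I would therefore assemble invertible operators $\tilE,\tilF$ between $\cX\oplus\kr E_{11}$ and $\cY\oplus\cH_2$ from the blocks of $E$ and $F$, pre- and post-composed with the projections and embeddings attached to \eqref{E11F22Dec} and with $(E_{11}')^{-1}$, $(F_{22}')^{-1}$, and then check that they are invertible and satisfy
\[
\mat{cc}{U&0\\0&I_{\kr E_{11}}}=\tilE\mat{cc}{V&0\\0&I_{\cH_2}}\tilF.
\]
The verification runs through the relations \eqref{EAEIds}, the pivotal one being (iv), $E_{11}V=-UF_{22}$, which already yields $U\sim V$ in the invertible-corner case; the work here is the bookkeeping of two distinct direct-sum decompositions of each of $\cX$ and $\cY$, but it is routine.

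For the second assertion, suppose without loss of generality that $\kr E_{11}$ is isomorphic to a complemented subspace of $\cH_2$, say $\cH_2=\cP\oplus\cQ$ with an isomorphism $\phi\colon\kr E_{11}\to\cP$ (the reverse inclusion is symmetric and yields the extension on the $U$-side). Conjugating the left-hand side of the reduced equivalence by $I_\cX\oplus\phi$ turns $I_{\kr E_{11}}$ into $I_\cP$, while on the right $I_{\cH_2}=I_\cP\oplus I_\cQ$, so the two sides acquire a common identity summand $I_\cP$. The goal is to delete this summand and reach the one-sided equivalence $U\sim V\oplus I_\cQ$, which is EAOE and hence, by the known implication EAOE $\Rightarrow$ SC, gives SC.

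The crux is exactly this deletion. For the abstract EAE relation it would be illegitimate — cancellation of identity summands can fail, consistent with the paper's observation that SC need not imply EAOE — so it must be carried out at the level of the operators $\tilE,\tilF$. The role of the complementation hypothesis is to present $\kr E_{11}$ as an identity summand matching a complemented identity summand of $\cH_2$; I would arrange that, after the conjugation by $I_\cX\oplus\phi$, this matched $I_\cP$-block decouples as a genuine identity block in both $\tilE$ and $\tilF$, so that it can be removed to leave an honest one-sided equivalence. Securing this decoupling, rather than appealing to an invalid cancellation, is the main obstacle, and it is here alone that the complementation hypothesis is essential.
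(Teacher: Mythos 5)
Your outline follows the same broad route as the paper (decompose along \eqref{E11F22Dec}, use identity (iv) to relate the invertible corners, then absorb the superfluous identity summands), but the step you dismiss as ``routine bookkeeping'' is where the entire difficulty of the proposition sits, and your plan stalls there. Identity (iv) does give the triangular forms $U_{21}=0$, $V_{12}=0$ and the equivalence $E_{11}'V_{11}=-U_{11}F_{22}'$; however, to absorb $I_{\cF_1}$ and $I_{\im F_{22}}$ --- equivalently, to make your assembled $\tilE$, $\tilF$ invertible --- you must invert the remaining corner operators $U_{22}=\Pi_{\cG_1}U\Up_{\cH_2}$ and $V_{22}=\Pi_{\kr E_{11}}V\Up_{\kr F_{22}}$. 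The identities \eqref{EAEIds}, as they stand, only produce a \emph{right} inverse of $U_{22}$ (from (viii)) and a \emph{left} inverse of $V_{22}$ (from (v)); nothing forces these to be two-sided for the given $E$ and $F$. The paper's proof requires an extra idea that is absent from your proposal: renormalize the equivalence by replacing $E$, $F$ with $\sbm{I&0\\X&I}E$ and $F\sbm{I&0\\-XU&I}$, where $X=F_{22}^+\whatE_{21}$ is built from the Moore--Penrose inverse of $F_{22}$. This is legitimate by Lemma 4.3 of \cite{tHR13}, preserves the special form of Lemma \ref{L:EFform}, and leaves $E_{11}$ and $F_{22}$ (hence the decompositions and the operators $U_{22}$, $V_{22}$) untouched, while forcing $E_{21}=P_{\kr F_{22}}E_{21}$ and $F_{21}=P_{\cH_2}$. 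Only after this normalization do identities (x) and (vii) upgrade the one-sided inverses to two-sided ones. Without this step, the invertibility of your $\tilE$ and $\tilF$ cannot be verified, and no amount of bookkeeping with the raw blocks of $E$ and $F$ will supply it.

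Your treatment of the second assertion has the same character: you correctly observe that cancelling a common identity summand from an abstract EAE relation is illegitimate, but you then leave the substitute argument (``decoupling the matched $I_\cP$-block in $\tilE$, $\tilF$'') as an acknowledged open obstacle, so this part is not a proof. In the paper no cancellation or decoupling is needed at all: once $U_{22}$ and $V_{22}$ are known to be invertible, $U$ and $V$ factor through invertible triangular operators as $U\sim U_{11}\oplus I_{\cH_2}$ and $V\sim V_{11}\oplus I_{\kr E_{11}}$, and the one-sided extension is then an explicit elementary factorization: if $T\in\cB(\cH_2,\kr E_{11})$ is injective with complemented range $\cZ=\im T$, left inverse $T^+$ and complement $\cZ'$, then $U_{11}\oplus I_{\cH_2}\oplus I_{\cZ'}$ equals an invertible operator times $V_{11}\oplus I_{\cZ}\oplus I_{\cZ'}=V_{11}\oplus I_{\kr E_{11}}$ times an invertible operator, with diagonal blocks $E_{11}'$, $T^+\Up_{\cZ}$ on one side and $-(F_{22}')^{-1}$, $\Pi_{\cZ}T$ on the other (the reverse embedding case is symmetric). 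So the obstacle you flag dissolves once the operators are in block-diagonal form --- but reaching that form is exactly the normalization step your proposal is missing.
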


In order to prove this result we first prove a few lemmas. In the remainder of this section $U$ and $V$ will be EAE as in Lemma \ref{L:EFform} and we assume $E_{11}$ and $F_{22}$ have complementable kernels and ranges, so that they admit decompositions as in \eqref{E11F22Dec} with $E_{11}'$ and $F_{22}'$ invertible.

\begin{lemma}\label{L:UVform}
The operators $U$ and $V$ admit operator matrix representations of the form
\begin{equation}\label{UVform}
\begin{aligned}
& U=\mat{cc}{U_{11}&U_{12}\\0&U_{22}}\colon\mat{c}{\im F_{22}\\ \cH_2}\to\mat{c}{\im E_{11}\\ \cG_1},\\
& V=\mat{cc}{V_{11}&0\\V_{21}&V_{22}}\colon\mat{c}{\cK_2\\ \kr F_{22}}\to\mat{c}{\cF_1\\ \kr E_{11}}.
\end{aligned}
\end{equation}
Moreover, $V_{11}$ and $U_{11}$ are equivalent, more specifically, $E_{11}' V_{11}=-U_{11} F_{22}'$.
Finally, $\Pi_{\kr F_{22}}E_{21}\Up_{\kr E_{11}}$ is a left inverse of $V_{22}$ and $\Pi_{\cH_2}\whatE_{21}\Up_{\cG_1}$ is a right inverse of $U_{22}$, so that
 $\im U_{22}=\cG_1$ and $V_{22}$ is bounded below.
\end{lemma}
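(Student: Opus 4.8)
The plan is to extract every claim directly from the two block decompositions in \eqref{E11F22Dec} together with the algebraic identities \eqref{EAEIds}, the decisive one being (iv), $E_{11}V=-UF_{22}$. First I would establish the two triangular shapes, both from (iv). Since $\im F_{22}=\ran F_{22}$, the identity $UF_{22}=-E_{11}V$ gives $U(\im F_{22})=\im(UF_{22})=\im(E_{11}V)\subseteq\im E_{11}$; this is precisely the vanishing of the $(2,1)$-entry $\Pi_{\cG_1}U\Up_{\im F_{22}}$, so $U$ has the upper-triangular form in \eqref{UVform}. Dually, for $y\in\kr F_{22}$ one has $E_{11}Vy=-UF_{22}y=0$, hence $Vy\in\kr E_{11}$; thus $V(\kr F_{22})\subseteq\kr E_{11}$, which is the vanishing of the $(1,2)$-entry of $V$. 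Note that a single identity simultaneously yields the upper-triangular shape of $U$ and the lower-triangular shape of $V$.

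Next, for the equivalence of the corner blocks I would compose (iv) with the embedding $\Up_{\cK_2}$. By \eqref{E11F22Dec} one has $F_{22}\Up_{\cK_2}=\Up_{\im F_{22}}F_{22}'$, $E_{11}\Up_{\cF_1}=\Up_{\im E_{11}}E_{11}'$, and $E_{11}\Up_{\kr E_{11}}=0$. Writing $V\Up_{\cK_2}=\Up_{\cF_1}V_{11}+\Up_{\kr E_{11}}V_{21}$ and, using the first step, $U\Up_{\im F_{22}}=\Up_{\im E_{11}}U_{11}$, the restriction of (iv) to $\cK_2$ collapses to $\Up_{\im E_{11}}E_{11}'V_{11}=-\Up_{\im E_{11}}U_{11}F_{22}'$. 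Cancelling the injective embedding $\Up_{\im E_{11}}$ yields $E_{11}'V_{11}=-U_{11}F_{22}'$, and since $E_{11}'$ and $F_{22}'$ are invertible this exhibits $U_{11}$ and $V_{11}$ as equivalent.

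For the one-sided inverses I would turn to identities (v) and (viii). From (v), $E_{21}V=I+F_{11}F_{22}$, so on $\kr F_{22}$ the second summand vanishes and $E_{21}V\Up_{\kr F_{22}}=\Up_{\kr F_{22}}$; combining this with $V\Up_{\kr F_{22}}=\Up_{\kr E_{11}}V_{22}$ from the first step and applying $\Pi_{\kr F_{22}}$ gives $(\Pi_{\kr F_{22}}E_{21}\Up_{\kr E_{11}})V_{22}=I_{\kr F_{22}}$, a bounded left inverse, whence $V_{22}$ is bounded below. Dually, from (viii), $U\whatE_{21}=I-E_{11}\whatE_{11}$; applying $\Pi_{\cG_1}$ and using $\Pi_{\cG_1}E_{11}=0$ (as $\im E_{11}$ is the range of $E_{11}$) gives $\Pi_{\cG_1}U\whatE_{21}=\Pi_{\cG_1}$, while the upper-triangular form of $U$ gives $\Pi_{\cG_1}U=U_{22}\Pi_{\cH_2}$. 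Composing with $\Up_{\cG_1}$ then yields $U_{22}(\Pi_{\cH_2}\whatE_{21}\Up_{\cG_1})=I_{\cG_1}$, a bounded right inverse, so $U_{22}$ is surjective, i.e.\ $\im U_{22}=\cG_1$.

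The individual computations are routine once this bookkeeping is in place; the only genuine subtlety, and where I expect to spend the most care, is tracking the four distinct direct-sum decompositions of $\cX$ and of $\cY$ and keeping the embeddings $\Up$ and (co)projections $\Pi$ mutually consistent, so that identities stated on the full spaces $\cX,\cY$ translate correctly into statements about the corner blocks $U_{11},V_{11},U_{22},V_{22}$.
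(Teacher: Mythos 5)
Your proposal is correct and follows essentially the same route as the paper: identity (iv) of \eqref{EAEIds} yields both triangular forms and the corner relation $E_{11}'V_{11}=-U_{11}F_{22}'$, and identities (v) and (viii) give the one-sided inverses of $V_{22}$ and $U_{22}$ exactly as in the paper's proof. The only cosmetic difference is that you obtain the zero blocks from the coordinate-free inclusions $U(\im F_{22})\subseteq \im E_{11}$ and $V(\kr F_{22})\subseteq \kr E_{11}$, whereas the paper compares all four entries of the written-out block product in (iv).
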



\begin{proof}[\bf Proof]
Decompose $U$ and $V$ as above, writing $U_{21}$ for the left lower corner of $U$ and $V_{12}$ for the right upper corner of $V$. From identity (iv) in \eqref{EAEIds} we then obtain that
\begin{align*}
\mat{cc}{E_{11}' V_{11} & E_{11}' V_{12}\\ 0&0} &=\mat{cc}{E_{11}'&0\\ 0&0}\mat{cc}{V_{11}& V_{12}\\ V_{21}&V_{22}}=E_{11}V= \\
& =-UF_{22}=\mat{cc}{U_{11}&U_{12}\\ U_{21} & U_{22}}\mat{cc}{-F_{22}' &0\\ 0&0}
=\mat{cc}{-U_{11}F_{22}' & 0\\ -U_{21}F_{22}'&0}.
\end{align*}
Inspecting the four identities and using that $F_{22}'$ and $E_{11}'$ are invertible we obtain that $V_{12}=0$, $U_{21}=0$ and $E_{11}' V_{11}=-U_{21}F_{22}'$, as claimed. In particular, $U$ and $V$ are as in \eqref{UVform}.

Since $U$ is as in \eqref{UVform} we have $\Pi_{\cG_1}U=U_{22}\Pi_{\cH_2}$. Using that $\Pi_{\cG_1} E_{11}=0$, we obtain from identity (viii) that
\[
0=\Pi_{\cG_1}E_{11}\whatE_{11}\Up_{\cG_1}=\Pi_{\cG_1}(I-U\whatE_{21})\Up_{\cG_1}=I_{\cG_1}-U_{22}\Pi_{\cH_2}\whatE_{21}\Up_{\cG_1}.
\]
Hence $\Pi_{\cH_2}\whatE_{21}\Up_{\cG_1}$ is a right inverse of $U_{22}$.

Since $V$ is as in \eqref{UVform} we find that $V \Up_{\kr F_{22}}=\Up_{\kr E_{11}}V_{22}$. Thus, by identity (v) in \eqref{EAEIds} we have
\begin{align*}
0
&= \Pi_{\kr F_{22}} F_{11}F_{22}\Up_{\kr F_{22}}=\Pi_{\kr F_{22}}(E_{21}V-I)\Up_{\kr F_{22}} \\
 &=\Pi_{\kr F_{22}}E_{21}\Up_{\kr E_{11}}V_{22}-I_{\kr F_{22}}.
\end{align*}
Thus $\Pi_{\kr F_{22}}E_{21}\Up_{\kr E_{11}}$ is a left inverse of $V_{22}$.
\end{proof}


In case $U_{22}$ and $V_{22}$ are invertible, we arrive at the conclusion of Proposition \ref{P:ComplRed}.

\begin{lemma}\label{L:InvSuf}
Assume $U_{22}$ and $V_{22}$ in the decompositions of Lemma \ref{L:UVform} are invertible. Then $U\oplus I_{\kr E_{11}}$ and $V\oplus I_{\cH_2}$ are equivalent, i.e., \eqref{EAE} holds with $\cX_0=\kr E_{11}$ and $\cY_0=\cH_2$. Furthermore, if $\kr E_{11}$ is isomorphic to a complemented subspace of $\cH_2$ or $\cH_2$ is isomorphic to a complemented subspace of $\kr E_{11}$, then $U$ and $V$ are EAOE and hence SC.
\end{lemma}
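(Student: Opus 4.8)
The plan is to exploit the invertibility of $U_{22}$ and $V_{22}$ to strip the off-diagonal corners from the block forms \eqref{UVform}, reducing both $U$ and $V$ to the shape ``$V_{11}$ plus an identity block'', and then to read off the two-sided and one-sided extensions by elementary bookkeeping. Throughout I write $A\sim B$ to mean that $A$ and $B$ are equivalent, and I will freely use that $\sim$ is an equivalence relation compatible with direct sums and with reordering of summands, and that every invertible operator is equivalent to the identity on its domain.

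First I would record the two reductions. Using \eqref{UVform} and that $U_{22}$ is invertible, the factorization
\[
\mat{cc}{U_{11}&U_{12}\\0&U_{22}}=\mat{cc}{I&U_{12}U_{22}^{-1}\\0&I}\mat{cc}{U_{11}&0\\0&U_{22}}
\]
exhibits $U\sim U_{11}\oplus U_{22}$, and since $U_{22}$ is invertible this gives $U\sim U_{11}\oplus I_{\cH_2}$. The analogous lower-triangular factorization of $V$, using that $V_{22}$ is invertible, gives $V\sim V_{11}\oplus V_{22}\sim V_{11}\oplus I_{\kr E_{11}}$. Feeding in the equivalence $U_{11}\sim V_{11}$ from Lemma \ref{L:UVform} (the identity $E_{11}'V_{11}=-U_{11}F_{22}'$ with $E_{11}',F_{22}'$ invertible), I arrive at the two key equivalences
\[
U\sim V_{11}\oplus I_{\cH_2},\qquad V\sim V_{11}\oplus I_{\kr E_{11}}.
\]

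From here both assertions are immediate. For the first, appending the relevant identity to each gives $U\oplus I_{\kr E_{11}}\sim V_{11}\oplus I_{\cH_2}\oplus I_{\kr E_{11}}$ and $V\oplus I_{\cH_2}\sim V_{11}\oplus I_{\kr E_{11}}\oplus I_{\cH_2}$, and these right-hand sides agree after permuting summands; hence $U\oplus I_{\kr E_{11}}\sim V\oplus I_{\cH_2}$, which is precisely \eqref{EAE} with $\cX_0=\kr E_{11}$ and $\cY_0=\cH_2$. For the second, if $\kr E_{11}$ is isomorphic to a complemented subspace of $\cH_2$, say $\cH_2\cong\kr E_{11}\oplus\cN$, then $I_{\cH_2}\sim I_{\kr E_{11}}\oplus I_{\cN}$, so $U\sim V_{11}\oplus I_{\kr E_{11}}\oplus I_{\cN}\sim V\oplus I_{\cN}$, which is EAOE (no extension on the $U$-side). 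Symmetrically, if $\cH_2$ is isomorphic to a complemented subspace of $\kr E_{11}$, writing $\kr E_{11}\cong\cH_2\oplus\cM$ yields $V\sim V_{11}\oplus I_{\cH_2}\oplus I_{\cM}\sim U\oplus I_{\cM}$, again EAOE. In either case SC then follows from the (known) implication that EAOE implies SC.

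I do not expect a genuine obstacle, since the substantive content was already isolated in Lemma \ref{L:UVform}: the invertibility hypothesis collapses the corners so that $U$ and $V$ become $V_{11}$ carrying along an identity block on $\cH_2$, respectively on $\kr E_{11}$. The only point needing care is the \emph{direction} of the one-sided extension, which is exactly what the complementability hypothesis controls: a complemented embedding $\kr E_{11}\hookrightarrow\cH_2$ lets one absorb the surplus identity into the $V$-side, the reverse embedding absorbs it into the $U$-side, and in the absence of either embedding the mismatched identity summands cannot in general be cancelled — which is precisely why this extra hypothesis is imposed.
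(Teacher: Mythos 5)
Your proof is correct and follows essentially the same route as the paper: the same upper/lower triangular factorizations reduce $U$ and $V$ to $U_{11}\oplus I_{\cH_2}$ and $V_{11}\oplus I_{\kr E_{11}}$, the equivalence $E_{11}'V_{11}=-U_{11}F_{22}'$ identifies the diagonal corners, and the identity summands are then matched up (for EAE) or absorbed through the complemented embedding (for EAOE), exactly as in the paper's explicit $3\times 3$ block factorizations. The only difference is presentational: you argue abstractly with equivalence being compatible with direct sums and permutations of summands, where the paper writes out the invertible block operators implementing these steps.
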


\begin{proof}[\bf Proof]
Assume $U_{22}$ and $V_{22}$ are invertible. Then we can factor $U$ and $V$ as
\[
U=\mat{cc}{I_{\im E_{11}}&U_{12}\\ 0&U_{22}}\mat{cc}{U_{11}&0\\0& I_{\cH_2}}
\ands
V=\mat{cc}{V_{11}&0\\0&I_{\kr E_{11}}}\mat{cc}{I_{\cK_2}&0\\V_{21}&V_{22}}.
\]
By our assumption, the left hand side in the factorization of $U$ and the right hand side in the factorization of $V$ are invertible. Thus we obtain that $U$ and $U_{11}\oplus I_{\cH_2}$ are equivalent and $V$ and $V_{11}\oplus I_{\kr E_{11}}$ are equivalent. We may therefore replace $U$ by $U_{11}\oplus I_{\cH_2}$ and $V$ by $V_{11}\oplus I_{\kr E_{11}}$. Recall from Lemma \ref{L:UVform} that $U_{11}$ and $V_{11}$ are equivalent via $E_{11}' V_{11}=-U_{22} F_{22}'$. One simply verifies that
\begin{align*}
&\mat{cc|c}{U_{11}&0&0\\ 0&I_{\cH_2} &0 \\ \hline 0& 0& I_{\kr E_{11}}}
=\mat{cc|c}{E_{11}'&0&0\\ 0&0 &I_{\cH_2} \\ \hline 0& I_{\kr E_{11}} & 0} \times\\
&\qquad\qquad\qquad \times\mat{cc|c}{V_{11}&0&0\\ 0&I_{\kr E_{11}} &0 \\ \hline 0& 0& I_{\cH_2}}
\mat{cc|c}{-(F_{22}')^{-1}&0&0\\ 0&0 &I_{\kr E_{11}} \\ \hline 0& I_{\cH_2}& 0}.
\end{align*}
from which we conclude that $U_{11}\oplus I_{\cH_2}$ and $V_{11}\oplus I_{\kr E_{11}}$ are EAE with extensions $\cX_0=\kr E_{11}$ and $\cY_0=\cH_2$, and hence the same holds for $U$ and $V$.

Assume there exists an injective operator $T\in\cB(\cH_2,\kr E_{11})$ whose range $\cZ:=\im T$ is complementable in $\kr E_{11}$, and hence closed. Hence $T$ admits a left inverse $T^+\in \cB(\kr E_{11},\cH_2)$. Let $\cZ'$ be a complement of $\cZ$ in $\kr E_{11}$. Then
\begin{align*}
&\mat{ccc}{U_{11}&0&0\\ 0&I_{\cH_2} &0 \\ 0& 0& I_{\cZ'}}=\\
&\qquad\quad=\mat{ccc}{E_{11}'&0&0\\ 0&T^{+} \Up_{\cZ} &0 \\ 0& 0& I_{\cZ'}}
\mat{ccc}{V_{11}&0&0\\ 0&I_{\cZ}& 0\\ 0&0& I_{\cZ'}}
 \mat{ccc}{-(F_{22}')^{-1}&0&0\\ 0&\Pi_{\cZ}T &0\\ 0&0 &I_{\cZ'}}.
\end{align*}
Note that the operators left and right of $V_{11}\oplus I_{\kr E_{11}}$ are invertible. Hence $U_{11}\oplus I_{\cH_2}$ and $V_{11}\oplus I_{\kr E_{11}}$ are EAOE, and the same holds true for $U$ and $V$. In case $\kr E_{11}$ can be embedded into $\cH_2$, a similar argument applies.
\end{proof}

In the remainder of this section we show that $U_{22}$ and $V_{22}$ are invertible.

\begin{lemma}\label{L:EFadj}
Without loss of generality we have
\begin{equation}\label{EFadj}
E_{21}=P_{\kr F_{22}}E_{21}
\ands F_{21}=P_{\cH_2}.
\end{equation}
\end{lemma}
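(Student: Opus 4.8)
The plan is to reduce both normalisations in \eqref{EFadj} to the single condition $\ran E_{21}\subseteq\kr F_{22}$. The second equality in \eqref{EFadj} is almost free: identity (i) in \eqref{EAEIds} gives $F_{21}=I_\cX+F_{22}F_{11}$, and since $\ran(F_{22}F_{11})\subseteq\im F_{22}=\kr P_{\cH_2}$ we obtain $P_{\cH_2}F_{21}=P_{\cH_2}$ for \emph{every} complement $\cH_2$ of $\im F_{22}$; thus the only defect of $F_{21}$ sits in its $\im F_{22}$-component. I claim that once $\ran E_{21}\subseteq\kr F_{22}$ has been installed, $F_{21}$ is automatically the required projection. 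Indeed $F_{21}=I_\cX+F_{22}F_{11}$ gives $\kr F_{21}\subseteq\im F_{22}$ directly, while for $y\in\cY$ identity (v) yields $F_{21}F_{22}y=F_{22}(I_\cY+F_{11}F_{22})y=F_{22}E_{21}Vy$, so $\ran E_{21}\subseteq\kr F_{22}$ forces $F_{21}F_{22}=0$, i.e. $\im F_{22}\subseteq\kr F_{21}$. Hence $\kr F_{21}=\im F_{22}$, and $F_{21}^2=F_{21}(I_\cX+F_{22}F_{11})=F_{21}+F_{21}F_{22}F_{11}=F_{21}$, so $F_{21}$ is idempotent; choosing $\cH_2:=\ran F_{21}$, which is a complement of $\im F_{22}$, gives exactly $F_{21}=P_{\cH_2}$.

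It remains to arrange $\ran E_{21}\subseteq\kr F_{22}$, equivalently $F_{22}E_{21}=0$, equivalently $P_{\cK_2}E_{21}=0$. This cannot be read off from \eqref{EAEIds}: combining (i) and (v) gives the identity $F_{21}F_{22}=F_{22}(I_\cY+F_{11}F_{22})=F_{22}E_{21}V$, which is generally nonzero, so the condition must be \emph{installed} by replacing the pair $(E,F)$. Here I would use the gauge freedom of the normalised factorisation $U\oplus I_\cY=E(V\oplus I_\cX)F$: for invertible $P,Q\in\cB(\cY\oplus\cX)$ with $P(V\oplus I_\cX)=(V\oplus I_\cX)Q$ the pair $(EP^{-1},QF)$ again factorises $U\oplus I_\cY$, and a direct check shows that block–triangular $P,Q$ keep the four normalised corners of $F,E,F^{-1},E^{-1}$ equal to $I_\cY,U,I_\cX,V$, hence keep the whole form \eqref{EFform1}. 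In particular the lower–triangular family $Q=\sbm{I&0\\ P_{21}V&I}$, $P=\sbm{I&0\\ P_{21}&I}$ with $P_{21}\in\cB(\cY,\cX)$ free transforms the corners by $E_{21}\mapsto E_{21}+F_{11}P_{21}$ and $F_{22}\mapsto F_{22}+P_{21}V$, and I would select $P_{21}$ so as to annihilate the offending part $P_{\cK_2}E_{21}$.

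The step I expect to be the crux is exactly this choice of $P_{21}$ together with the verification that it does not destroy the standing hypotheses, and two difficulties are intertwined. First, the modification genuinely couples $E_{21}$ and $F_{22}$: since $F_{22}E_{21}V=F_{21}F_{22}$, one cannot kill $F_{22}E_{21}$ on $\ran V$ without simultaneously forcing $F_{21}F_{22}=0$, so the transformation must move $F_{22}$, and with it $E_{11}$ through the right–lower corner, rather than $E_{21}$ alone. Second, after the modification one must re-establish that the \emph{new} $E_{11}$ and $F_{22}$ still have complementable kernels and ranges, so that the decompositions \eqref{E11F22Dec} and the conclusions of Lemma \ref{L:UVform} remain valid with the re-chosen complements $\cH_2:=\ran F_{21}$ and $\cK_2$. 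The part of $E_{21}$ lying off $\ran V$ I would control using that the factorisation pins down $E$ only on $\ran(V\oplus I_\cX)$, leaving room there to place $\ran E_{21}$ inside $\kr F_{22}$; confirming that this residual adjustment preserves invertibility of $E$ and its normalised inverse corners is the delicate bookkeeping at the heart of the argument.
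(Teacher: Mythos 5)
Your reduction of the second identity to the first is correct and is a nice observation: granted $F_{22}E_{21}=0$, identities (i) and (v) of \eqref{EAEIds} do make $F_{21}$ an idempotent with $\kr F_{21}=\im F_{22}$, and re-choosing $\cH_2:=\ran F_{21}$ is harmless since $\cH_2$ was an arbitrary complement. But the core step --- installing $\ran E_{21}\subseteq \kr F_{22}$ --- is not proved: you only exhibit a gauge family and say you ``would select $P_{21}$ so as to annihilate the offending part,'' explicitly deferring both the selection and the verification; that deferred step \emph{is} the lemma, so as it stands this is a plan, not a proof. Moreover your gauge family is structurally ill-suited. With $P=\sbm{I&0\\ P_{21}&I}$, $Q=\sbm{I&0\\ P_{21}V&I}$ the corners transform as $E_{11}\mapsto E_{11}-UP_{21}$, $E_{21}\mapsto E_{21}+F_{11}P_{21}$, $F_{22}\mapsto F_{22}+P_{21}V$, $F_{21}\mapsto F_{21}+P_{21}VF_{11}$, so the condition to be achieved, $(F_{22}+P_{21}V)(E_{21}+F_{11}P_{21})=0$, is \emph{quadratic} in $P_{21}$, with no existence argument given; and the transformation perturbs precisely the two operators, $E_{11}$ and $F_{22}$, whose complementable kernels and ranges are the standing hypothesis of Section \ref{S:comp}, so \eqref{E11F22Dec} and Lemma \ref{L:UVform} would have to be re-established for the new pair --- which you flag but do not do. Note also that your inference ``one cannot kill $F_{22}E_{21}$ on $\ran V$ without forcing $F_{21}F_{22}=0$, so the transformation must move $F_{22}$'' is a non sequitur: $F_{21}F_{22}=0$ can equally be achieved by moving $F_{21}$, which is what actually happens in the correct argument.

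The missing idea is to use the \emph{opposite-sided} gauge together with an explicit corrector. The paper sets $X=F_{22}^{+}\whatE_{21}$, where $F_{22}^{+}$ is the generalized inverse determined by \eqref{E11F22Dec}, and replaces $(E,F)$ by $\wtilE=\sbm{I&0\\ X&I}E$, $\wtilF=F\sbm{I&0\\ -XU&I}$; this preserves the factorization because $\sbm{I&0\\ X&I}(U\oplus I)\sbm{I&0\\ -XU&I}=U\oplus I$, and it preserves the form \eqref{EFform1} while leaving $E_{11}$ and $F_{22}$ \emph{untouched} (only $F_{11}\mapsto F_{11}-XU$, $E_{21}$, $F_{21}$ and the hatted corners move), so no re-verification of complementability is needed. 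Then identity (xi) gives $\wtilE_{21}=E_{21}+F_{22}^{+}\whatE_{21}E_{11}=E_{21}-F_{22}^{+}F_{22}E_{21}=P_{\kr F_{22}}E_{21}$, and identities (vii) and (i) give $\wtilF_{21}=F_{21}-F_{22}F_{22}^{+}\whatE_{21}U=(I-P_{\im F_{22}})F_{21}=P_{\cH_2}$, so both normalizations hold simultaneously, linearly in $X$, and with the \emph{pre-chosen} complement $\cH_2$. Your two-stage outline could in principle be repaired by switching to this gauge (act on the left of $E$ and the right of $F$ so that $E_{11}$ and $F_{22}$ stay fixed), but without the choice $X=F_{22}^{+}\whatE_{21}$ and the accompanying computation, the crux of the lemma remains unproved.
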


\begin{proof}[\bf Proof]
Write $F_{22}^+$ for the Moore-Penrose generalized inverse of $F_{22}$, that is,
\[
F_{22}^+=\mat{cc}{(F_{22}')^{-1}&0\\0&0}\colon \mat{c}{\im F_{22}\\ \cH_2}\to\mat{c}{\cK_2\\ \kr F_{22}}.
\]
Now set $X=F_{22}^+ \whatE_{21}$ and define $\wtilE=\sbm{I&0\\X&I}E$ and $\wtilF=F\sbm{I&0\\ -XU &I}$, that is,
\[
\wtilE=\mat{cc}{E_{11}&U\\ E_{21}+XE_{11} & -F_{11}+XU}\ands
\wtilF=\mat{cc}{F_{11}-XU & I\\ F_{21}-F_{22}XU & F_{22}}.
\]
Note that this transformation does not effect the $(1,1)$-entry of $E$, nor the $(2,2)$-entry of $F$, hence the decompositions of $U$ and $V$ in Lemma \ref{L:UVform} remain unchanged. Moreover, by Lemma 4.3 in \cite{tHR13} the EAE of $U$ and $V$ is also established through $\wtilE$ and $\wtilF$, i.e., \eqref{EAE} holds with $E$ and $F$ replaced by $\wtilE$ and $\wtilF$, respectively, and $\wtilE$ and $\wtilF$ still have the special form described in Lemma \ref{L:EFform}. Hence, without loss of generality, we started with $\wtilE$ and $\wtilF$ instead of $E$ and $F$. Using identity (xi) in \eqref{EAEIds} we obtain that the $(2,1)$-entry in $\wtilE$ is given by
\begin{align*}
\wtilE_{21}&=E_{21}+XE_{11}
=E_{21}+F_{22}^+\whatE_{21}E_{11}
=E_{21}-F_{22}^+ F_{22}E_{21}\\
&=E_{21}-(I-P_{\kr F_{22}})E_{21}=P_{\kr F_{22}}E_{21}.
\end{align*}
So $P_{\kr F_{22}}\wtilE_{21}=\wtilE_{21}$. Since the $(2,2)$-entry of $F$ is not effected by the transformation we see that without loss of generality we may assume $E_{21}=P_{\kr F_{22}}E_{21}$. Next, using identities (vii) and (i), observe that the $(2,1)$-entry of $\wtilF$ is given by
\begin{align*}
\wtilF_{21}&=F_{21}-F_{22}XU
=F_{21}-F_{22}F_{22}^+ \whatE_{21}U=F_{21}- P_{\im F_{22}} F_{21}
=(I- P_{\im F_{22}}) F_{21}\\
&= (I- P_{\im F_{22}}) (I-F_{11}F_{22})=(I- P_{\im F_{22}})=P_{\cH_2}.
\end{align*}
Again using that the $(2,2)$-entry in $F$ is not effected by the transformation we see that without loss of generality $F_{21}=P_{\cH_2}$.
\end{proof}

\begin{proof}[\bf Proof of Proposition \ref{P:ComplRed}] We may assume \eqref{EFadj} hold. By Lemma \ref{L:InvSuf} it suffices to show that $U_{22}$ and $V_{22}$ in the decompositions of $U$ and $V$ from Lemma \ref{L:UVform} are invertible. In fact, it suffices to show the left inverse of $V_{22}$ and right inverse of $U_{22}$ obtained in the the last claim of Lemma \ref{L:UVform} are also a right inverse, respectively, left inverse, i.e., it remains to show that
\begin{equation}\label{RemIds}
V_{22}\Pi_{\kr F_{22}}E_{21}\Up_{\kr E_{11}}=I_{\kr E_{11}}
\ands
\Pi_{\cH_2}\whatE_{21}\Up_{\cG_1}U_{22}=I_{\cH_2}.
\end{equation}
Using $E_{21}=P_{\kr F_{22}}E_{21}$ together with identity (x) in \eqref{EAEIds} gives the first identity:
\begin{align*}
0&
=\Pi_{\kr E_{11}}\whatE_{11}E_{11}\Up_{\kr E_{11}}
=I_{\kr E_{11}}-\Pi_{\kr E_{11}}VE_{21}\Up_{\kr E_{11}}\\
&=I_{\kr E_{11}}-\Pi_{\kr E_{11}}V P_{\kr F_{22}}E_{21}\Up_{\kr E_{11}}
=I_{\kr E_{11}}-V_{22} \Pi_{\kr F_{22}}E_{21}\Up_{\kr E_{11}}.
\end{align*}
Since $E_{21}=P_{\kr F_{22}}E_{21}$, we have $F_{22}E_{21}=0$ and thus
\[
0=-F_{22}E_{21}\Up_{\cF_1}=\whatE_{21}E_{11}\Up_{\cF_1}=\whatE_{21}\Up_{\im E_{11}} E_{11}'.
\]
From this we obtain that $\whatE_{21}P_{\im E_{11}}=0$. In other words $\whatE_{21}=\whatE_{21}P_{\cG_1}$. The second identity in \eqref{RemIds} now follows using  $F_{21}=P_{\cH_2}$ and identity (vii) in \eqref{EAEIds}:
\[
I_{\cH_2}=\Pi_{\cH_2}F_{21}\Up_{\cH_2}
=\Pi_{\cH_2}\whatE_{21}U\Up_{\cH_2}
=\Pi_{\cH_2}\whatE_{21}P_{\cG_1}U\Up_{\cH_2}
=\Pi_{\cH_2}\whatE_{21}\Up_{\cG_1}U_{22}.\qedhere
\]
%
\end{proof}

\section{Proof of the main result}\label{S:ProofMain}

In this section we turn to the case that $U$ and $V$ are inessential and provide a proof of Theorem \ref{T:main1}. We start with a lemma which shows that the results of Section \ref{S:comp} apply.

\begin{lemma}\label{L:Fred}
Let $U\in\fI(\cX)$ and $V\in\fI(\cY)$. Assume $U$ and $V$ are EAE with $E$ and $F$ as in \eqref{EFform1}. Then $F_{11}$, $F_{22}$, $E_{11}$ and $\what{E}_{11}$ are Fredholm and
\begin{equation}\label{IndexIds}
\Ind(F_{11})=-\Ind(F_{22}) \ands \Ind(E_{11})=-\Ind(\what{E}_{11}).
\end{equation}
\end{lemma}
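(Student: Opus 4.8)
The plan is to read off from the identities in \eqref{EAEIds} that each of the four operators, when composed with its natural partner, differs from $\pm I$ by an inessential operator, and then to invoke two standard facts: (a) for an inessential operator $K$ the operator $I-K$ is Fredholm of index $0$, and (b) if both $AB$ and $BA$ are Fredholm, then $A$ and $B$ are separately Fredholm. Fact (a) follows from the defining property of inessential operators (take $S=I$) together with the homotopy $t\mapsto I-tK$, $t\in[0,1]$, which is a norm-continuous path of Fredholm operators along which the locally constant, integer-valued index stays equal to $\Ind(I)=0$. Since $\fI(\cdot)$ is a two-sided ideal, I would first record that the products $U\what{E}_{21}$, $VE_{21}$, $E_{21}V$ and, crucially, $\what{E}_{21}U$ are all inessential.

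For $E_{11}$ and $\what{E}_{11}$ I would use identities (viii) and (x): they state exactly that $E_{11}\what{E}_{11}=I_\cX-U\what{E}_{21}$ and $\what{E}_{11}E_{11}=I_\cY-VE_{21}$, both of which are Fredholm of index $0$ by fact (a). For $F_{11}$ and $F_{22}$ the relation $F_{11}F_{22}=E_{21}V-I_\cY$ from (v) immediately produces one Fredholm product of index $0$; the other composition $F_{22}F_{11}$ requires a small observation, which I expect to be the only genuinely delicate point. Identity (vii) shows $F_{21}=\what{E}_{21}U$ is inessential, and substituting this into (i) yields $F_{22}F_{11}=F_{21}-I_\cX=-(I_\cX-\what{E}_{21}U)$, again Fredholm of index $0$.

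With $E_{11}\what{E}_{11}$, $\what{E}_{11}E_{11}$, $F_{11}F_{22}$ and $F_{22}F_{11}$ all Fredholm, fact (b) — proved by noting $\kr B\subseteq\kr(AB)$ and $\ran(AB)\subseteq\ran A$, so that two-sided Fredholmness forces each factor to have finite-dimensional kernel and finite-codimensional (hence closed) range — yields that $E_{11}$, $\what{E}_{11}$, $F_{11}$ and $F_{22}$ are all Fredholm. Finally, since each of the four products has index $0$, additivity of the index gives $\Ind(E_{11})+\Ind(\what{E}_{11})=0$ and $\Ind(F_{11})+\Ind(F_{22})=0$, which are precisely the identities in \eqref{IndexIds}. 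The main obstacle is the composition $F_{22}F_{11}$: identity (i) only delivers $F_{22}F_{11}=F_{21}-I_\cX$, and one must first recognize $F_{21}$ as inessential through (vii) before fact (a) becomes applicable.
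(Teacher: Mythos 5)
Your proof is correct, and its skeleton coincides with the paper's: both read off from identities (i), (v), (vii), (viii) and (x) in \eqref{EAEIds} that $F_{11}F_{22}$, $F_{22}F_{11}$, $E_{11}\whatE_{11}$ and $\whatE_{11}E_{11}$ are inessential perturbations of $\pm I$ (including the one delicate substitution you flag, namely recognizing $F_{21}=\whatE_{21}U$ via (vii) before inserting it into (i), which is exactly how the paper gets $-F_{22}F_{11}=I-\whatE_{21}U$), both justify the index-$0$ claim for these products by the same homotopy argument using local constancy of the index, and both finish with additivity of the index to obtain \eqref{IndexIds}. The one step where you genuinely diverge is the passage from Fredholmness of the two products to Fredholmness of the individual factors. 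The paper argues in the Calkin algebra $\cB(\cY)/\fI(\cY)$: identities (v), (i) and (vii) show that $[F_{11}]$ and $[F_{22}]$ are mutually inverse there, and it then invokes the characteristic property of the inessential ideal (invertibility modulo $\fI$ is equivalent to Fredholmness, the Kleinecke--Schechter fact \cite{S68}; alternatively it suggests extending the Atkinson corollary \cite[Corollary~3.3.1]{A02} from compact to inessential operators). You instead use the elementary lemma that if $AB$ and $BA$ are both Fredholm then so are $A$ and $B$, proved directly from the inclusions $\kr B\subseteq\kr(AB)$, $\kr A\subseteq\kr(BA)$, $\ran(AB)\subseteq\ran A$, $\ran(BA)\subseteq\ran B$, together with the standard fact that a finite-codimensional range of a bounded operator is automatically closed. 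Your route buys self-containedness: it uses nothing about $\fI$ beyond its definition and the ideal property, and avoids any appeal to the quotient-algebra characterization of Fredholmness. The paper's route is shorter on the page and displays the structural reason the lemma holds, namely that $\fI$ is precisely the largest ideal whose invertibles in the quotient correspond to the Fredholm operators; but as a verification your argument is complete and every step is sound.
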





\begin{proof}[\bf Proof]
By identities (i), (vii) and (v) in \eqref{EAEIds} we have $-F_{22}F_{11}=I-\whatE_{21}U$ and $-F_{11}F_{22}=I-E_{21}V$, and thus, by the definition of inessential operators, $-F_{11}F_{22}$ and $-F_{22}F_{11}$ are Fredholm. Furthermore, $\whatE_{21}U\in \fI(\cX)$ and $E_{21}V\in\fI(\cY)$, since $\fI(\cX)$ and $\fI(\cY)$ are ideals, so that $\Ind(F_{11}F_{22}) =\Ind(I-E_{21}V)=\Ind(I) =0$, and similarly $\Ind(F_{22}F_{11})=0$. Here we used that inessential perturbations do not effect the Fredholm index, which can be proved in the same way as for compact operators, using that the index function in locally constant. Hence $F_{11}F_{22}$ and $F_{22}F_{11}$ are Fredholm with index 0.

To see that $F_{11}$ and $F_{22}$ are Fredholm, one can use that $I+ F_{22}F_{11}=\whatE_{21}U$ and $I+F_{11}F_{22}=E_{21}V$ are inessential, in combination with an extension of a corollary of Atkinson's theorem \cite[Corollary~3.3.1]{A02} from compact to inessential operators. For a direct proof, note that identity (v) in \eqref{EAEIds} shows that within the Calkin algebra $\cB(\cY)/\fI(\cY)$ we have
$[F_{11}][F_{22}]=[F_{11}F_{22}]=[E_{21}V-I]=[I]$,
and similarly, by identities (i) and (vii) in \eqref{EAEIds}, $[F_{22}][F_{11}]=[I]$. Hence $[F_{11}]$ and $[F_{22}]$ are invertible in $\cB(\cY)/\fI(\cY)$, and thus $F_{11}$ and $F_{22}$ are Fredholm. The relations for the Fredholm indices follows from \cite[Theorem~IV.13.1]{TL80} which proves the claims regarding $F_{11}$ and $F_{22}$. For $E_{11}$ and $\whatE_{11}$ one can apply a parallel reasoning based in identities (viii) and (x) in \eqref{EAEIds}.
\end{proof}

We now employ the results of Section \ref{S:comp} to prove the main result of the paper.

\begin{proof}[\bf Proof of Theorem \ref{T:main1}]
As indicated in the paragraph following the statement of Theorem \ref{T:main1}, it remains to show that if $U$ and $V$ are EAE, they are also EAOE. So assume $U$ and $V$ are EAE, then by Lemma \ref{L:EFform} we may assume $E$ and $F$ are as in \eqref{EFform1}. Since $U$ and $V$ are inessential, by Lemma \ref{L:Fred}, the operators $E_{11}$ and $F_{22}$ in \eqref{EFform1} are Fredholm, and hence have complementable kernels and ranges. Then by Proposition \ref{P:ComplRed}, $U$ and $V$ are EAE with $\cX_0=\kr E_{11}$ and $\cY_0=\cH_2$. Furthermore, $\kr E_{11}$ and $\cH_2$ are finite dimensional, since $E_{11}$ and $F_{22}$ are Fredholm, hence $U$ and $V$ are EAE with extensions to finite dimensional Banach spaces. The fact that $\kr E_{11}$ and $\cH_2$ are finite dimensional also implies that one can either embed $\kr E_{11}$ into $\cH_2$, or conversely, depending on which has the largest dimension. Thus, by the second statement of Proposition \ref{P:ComplRed}, $U$ and $V$ are EAOE. In both cases the extensions are on finite dimensional Banach spaces.
\end{proof}


\begin{remark}
The proof of Proposition \ref{P:ComplRed} relies on the fact that the operators $U_{22}$ and $V_{22}$, as in Lemma \ref{L:UVform}, are invertible. In case $U$ and $V$ are inessential, this implies that $\dim \cH_2=\dim \cG_1$ and $\dim \kr F_{22}=\dim \kr E_{11}$, in particular, $\Ind(F_{22})=\Ind(E_{11})$. In combination with Lemma \ref{L:Fred}, this yields
\[
\Ind(F_{22})=\Ind(E_{11})=-\Ind(\wtilE_{11})=-\Ind(F_{11}).
\]
For $\wtilE_{11}$ and $F_{11}$, the last identity, i.e., $\Ind(\wtilE_{11})=\Ind(F_{11})$, can also be strengthened to the fact that the dimensions of the kernels of $\wtilE_{11}$ and $F_{11}$ are the same, as well as the dimensions of the complements of their ranges. This fact is obtained by interchanging the roles of $U$ and $V$ and working with $E^{-1}$ and $F^{-1}$ as in \eqref{EFform1}.

The proof of Lemma \ref{L:InvSuf} also reveals that the index of the four Fredholm operators indicates the dimension of the finite dimensional extension in the EAOE of $U$ and $V$, with $\Ind(F_{22})$ being positive meaning one has to extend $U$ and $\Ind(F_{22})$ being negative meaning one has to extend $V$.
\end{remark}

\section{An application to multiplication operators}\label{S:Appl}

In this section we present an application of Theorem 2.5 of \cite{tHMR15} to multiplication operators on $L^p$-spaces on the complex unit circle $\BT$ for $1\leq p<\infty$. See \cite{BS06} for definitions and further details. With a function $f\in L^\infty$, i.e., a measurable and essentially bounded function on $\BT$, we associate its multiplication operator
\[
M_f\colon L^p\to L^p,\quad (M_f g)(e^{it})=f(e^{it}) g(e^{it}) \quad (\mbox{e.a.}\ t\in[-\pi,\pi]).
\]
Let $H^p$ denote the Hardy space of functions in $L^p$ with negatively indexed Fourier coefficients equal to zero. Write $P$ for the Riesz projection that projects $L^p$ onto $H^p$ and set $Q=I-P$ and $K^p=Q L^p$. Decompose $M_f$ as
\begin{equation}\label{Mfdec}
M_f=\mat{cc}{\wtilT_f & \wtilH_f\\  H_f & T_f}\colon\mat{c}{K^p\\ H^p}\to\mat{c}{K^p\\ H^p}.
\end{equation}
Then $H_f$ and $T_f$ are the Hankel and Toepitz operators associated with $f$, respectively, while via the usual identification of $K^p$ and $H^p$, the operators $\wtilH_f$ and $\wtilT_f$ can be identified with the Hankel and Toepitz operators associated with $\wtil{f}$, respectively, where $\wtil{f}$ is given by $\wtil{f}(z)=\overline{f(\overline{z})}$, $z\in \BT$.

If $f$ is in the Wiener algebra $\cW$, i.e., the Fourier coefficients of $f$ are absolutely summable, then $f$ is continuous on $\BT$ (and thus in $L^\infty$) and $H_f$ and $\wtil{H}_f$ are compact. Furthermore, by Wiener's $1/f$ Theorem, $1/f\in \cW$ if and only if $f(z)\neq 0$ for all $z\in\BT$. In particular, $1/f$ is in $L^\infty$, so that $M_{1/f}$ is well defined, and the Hankel operators $H_{1/f}$ and $\wtilH_{1/f}$ are compact.

\begin{theorem}\label{T:appl}
Let $f\in \cW$ such that $f(z)\neq 0$ for all $z\in\BT$. Then $H_f$ and $H_{1/f}$ generate the same operator ideal. Likewise, $\wtilH_f$ and $\wtilH_{1/f}$ generate the same operator ideal.
\end{theorem}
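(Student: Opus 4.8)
The plan is to realise the two Hankel operators as the off-diagonal coupling entries of the mutually inverse multiplication operators $M_f$ and $M_{1/f}$, and then to appeal to Theorem 2.5 of \cite{tHMR15}. First I would record that, by Wiener's $1/f$ theorem, $1/f\in\cW$, so that $M_{1/f}$ is a bounded operator on $L^p$; since $M_f M_{1/f}=M_{f\cdot(1/f)}=I=M_{1/f}M_f$, we have $M_{1/f}=M_f^{-1}$. Writing both operators in the block form \eqref{Mfdec} exhibits $M_f$ as an invertible operator realising a matricial coupling of the Toeplitz operators $\wtilT_f$ and $T_{1/f}$, in which $H_f$ and $H_{1/f}$ appear as the lower-left coupling entries of $M_f$ and $M_f^{-1}=M_{1/f}$, respectively. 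Symmetrically, $\wtilH_f$ and $\wtilH_{1/f}$ appear as the upper-right coupling entries.

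Next I would extract the algebraic identities that tie the two Hankel operators together. Multiplying out $M_f M_{1/f}=I$ and $M_{1/f}M_f=I$ and reading off the $(2,1)$-entries gives
\begin{align*}
T_f H_{1/f}=-H_f\wtilT_{1/f}
\ands
T_{1/f} H_f=-H_{1/f}\wtilT_f,
\end{align*}
and reading the $(1,2)$-entries yields the corresponding relations for $\wtilH_f$ and $\wtilH_{1/f}$. These identities express each Hankel operator, after composition with a Toeplitz operator on the left and one on the right, in terms of the other Hankel operator. Consequently, once the Toeplitz operators are invertible modulo the relevant ideal, each of $H_f,H_{1/f}$ will lie in the two-sided ideal generated by the other, which is exactly the mechanism I want to feed into Theorem 2.5 of \cite{tHMR15}.

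The key input is therefore that $T_f$, $T_{1/f}$, $\wtilT_f$ and $\wtilT_{1/f}$ are Fredholm while $H_f,H_{1/f},\wtilH_f,\wtilH_{1/f}$ are compact. The Fredholmness follows because $f$ and $\wtil{f}$ are continuous (being in $\cW$) and nonvanishing on $\BT$, so the associated Toeplitz operators are Fredholm; the compactness of the Hankel operators is already noted in the paragraph preceding the statement. With these properties in hand, I would check that the hypotheses of Theorem 2.5 of \cite{tHMR15} are met and apply it to the matricial coupling furnished by $M_f$ (and, separately, to the upper-right coupling entries), concluding that $H_f$ and $H_{1/f}$, respectively $\wtilH_f$ and $\wtilH_{1/f}$, generate the same operator ideal.

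The step I expect to be the main obstacle is passing from \emph{invertible modulo the ideal} to the \emph{exact} equality of the generated ideals. Since the Toeplitz operators are only Fredholm, and not genuinely invertible, the displayed identities recover each Hankel operator from the other only up to a finite-rank error arising from a Fredholm parametrix; absorbing that error is where the Calkin-algebra machinery of the paper, packaged into Theorem 2.5 of \cite{tHMR15}, does the real work. Accordingly, the part requiring the most care is matching the present configuration to that theorem precisely, keeping careful track of which blocks play the role of the compact coupling entries and which play the role of the Fredholm diagonal blocks.
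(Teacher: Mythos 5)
Your algebraic groundwork is correct --- $M_{1/f}=M_f^{-1}$ by Wiener's theorem, the $(2,1)$-entry identities $T_fH_{1/f}=-H_f\wtilT_{1/f}$ and $T_{1/f}H_f=-H_{1/f}\wtilT_f$, and the observation that $M_f$ matricially couples $\wtilT_f$ with $T_{1/f}$ --- but your plan for finishing does not match Theorem 2.5 of \cite{tHMR15}, and you are missing the one idea the paper's proof turns on. The hypothesis of Theorem 2.5 of \cite{tHMR15} is that the two operators \emph{themselves} are EAE; it is not a statement about block configurations with ``Fredholm diagonal blocks and compact coupling entries'', so in your arrangement, where the matricially coupled pair is $(\wtilT_f,T_{1/f})$ and the Hankel operators sit in the off-diagonal positions, the theorem simply does not apply to the Hankel pair --- which is exactly why you correctly sense an obstacle in ``matching the present configuration to that theorem''. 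The paper's remedy is a trivial but decisive reordering of rows and columns: since
\begin{equation*}
\mat{cc}{\wtilH_f & \wtilT_f\\ T_f & H_f}^{-1}=\mat{cc}{H_{1/f} & T_{1/f}\\ \wtilT_{1/f} & \wtilH_{1/f}},
\end{equation*}
the operators $H_f$ and $H_{1/f}$ are themselves MC, hence EAE, and Theorem 2.5 of \cite{tHMR15} applies verbatim; swapping rows instead of columns does the same for $\wtilH_f$ and $\wtilH_{1/f}$. Note that this argument uses neither compactness of the Hankel operators nor Fredholmness of the Toeplitz operators: it needs only the invertibility of $M_f$, and the compactness remarks preceding the theorem matter only for the corollaries.

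That said, the mechanism in your middle paragraphs can be upgraded to a complete \emph{direct} proof bypassing \cite{tHMR15} entirely, so the gap is in your framing rather than in your mathematics. Since $f$ and $\wtil f$ are continuous and nonvanishing, the Toeplitz blocks are Fredholm (for $1<p<\infty$); choosing a parametrix $S$ of $\wtilT_f$ with $\wtilT_fS=I-K$ and $K$ of finite rank, your identity gives $H_{1/f}=-T_{1/f}H_fS+H_{1/f}K$. The first summand lies in the operator ideal generated by $H_f$, and the second is of finite rank, hence lies in \emph{every} operator ideal, since in the Pietsch framework each operator ideal contains the finite-rank operators. This standard one-line fact --- not any Calkin-algebra machinery, and not Theorem 2.5 --- is what absorbs the parametrix error, so the step you flag as ``the main obstacle'' is harmless; by symmetry the two generated ideals coincide, and the $(1,2)$-entries handle $\wtilH_f$ and $\wtilH_{1/f}$. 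The comparison, then: your route is valid but strictly heavier (it invokes the Fredholm theory of Toeplitz operators with continuous symbol, hence genuinely uses $f\in\cW$ and nonvanishing on $\BT$), whereas the paper's reordering is purely algebraic and would work whenever $M_f$ is invertible.
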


\begin{proof}
By our assumption $1/f\in \cW$, and thus $1/f\in L^\infty$. Then $M_f$ is invertible on $L^p$ with inverse
\[
M_f^{-1}=M_{1/f}=\mat{cc}{\wtilT_{1/f} & \wtilH_{1/f}\\  H_{1/f} & T_{1/f}}\colon \mat{c}{K^p\\ H^p}\to\mat{c}{K^p\\ H^p}.
\]
Reordering rows and columns gives
\[
\mat{cc}{\wtilH_f & \wtilT_f\\  T_f & H_f}^{-1}
= \mat{cc}{H_{1/f} & T_{1/f}\\ \wtilT_{1/f} & \wtilH_{1/f}}.
\]
This shows that $H_f$ and $H_{1/f}$ are MC, and thus EAE. As a consequence of Theorem 2.5 of \cite{tHMR15} we find that $H_f$ and $H_{1/f}$ generate the same operator ideal. By another reorganization of the rows and columns we find that  $\wtilH_f$ and $\wtilH_{1/f}$ are MC, and thus generate the same operator ideal.
\end{proof}

Let $B_p^\al$ denote the Besov space
\[
B_p^\al:=\left\{ g\in L^p \colon \int_{-\pi}^\pi |t|^{-1-\al p} \|\De_t^n g\|_p^p \tu{d}t <\infty \right\}
\]
where $\| . \|_p$ denotes the $L^p$-norm, $n$ is any integer greater than $\al$, $(\De_t g)(e^{is})=g(e^{i(s+t)})$, $s,t\in [-\pi,\pi]$, $g\in L^p$, and $\De_t^{k+1}=\De_t \De_t^{k}$. By a theorem of Peller \cite{P82}, cf., \cite[Theorem 10.9]{BS06}, for a function $f\in L^\infty$ we have $P f \in B_p^{1/p}$ if and only if $H_f$ is in the Schatten-von Neumann class $\fC_p$, i.e., if and only if the approximation numbers of $H_f$ are $p$-summable. Since $\fC_p$ forms an operator ideal, Theorem \ref{T:appl} yields the following corollary.

\begin{corollary}\label{C1}
Let $f\in \cW$ such that $f(z)\neq 0$ for all $z\in\BT$. Then $P f \in B_p^{1/p}$ if and only if $P (1/f) \in B_p^{1/p}$ and $P \wtil{f} \in B_p^{1/p}$ if and only if $P (1/\wtil{f}) \in B_p^{1/p}$.
\end{corollary}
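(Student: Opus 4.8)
The plan is to reduce the two claimed equivalences to membership statements for Hankel operators in the Schatten--von Neumann class $\fC_p$, and then to invoke Theorem \ref{T:appl}. The key ingredient is Peller's theorem \cite{P82} (see \cite[Theorem 10.9]{BS06}), which asserts that for $g\in L^\infty$ one has $Pg\in B_p^{1/p}$ if and only if $H_g\in\fC_p$, where membership in $\fC_p$ means that the approximation numbers of $H_g$ are $p$-summable. Applying this translation to $g=f$ and to $g=1/f$ (which is legitimate, since the hypothesis $f(z)\neq 0$ on $\BT$ together with Wiener's theorem guarantees $1/f\in\cW\subseteq L^\infty$), the first claimed equivalence $Pf\in B_p^{1/p}\Leftrightarrow P(1/f)\in B_p^{1/p}$ becomes equivalent to the single statement $H_f\in\fC_p\Leftrightarrow H_{1/f}\in\fC_p$.

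The step I regard as the crux is passing from the conclusion of Theorem \ref{T:appl}, that $H_f$ and $H_{1/f}$ generate the same operator ideal, to the equivalence of their membership in the fixed ideal $\fC_p$. Here I would argue as follows. Write $\cJ(T)$ for the smallest operator ideal containing an operator $T$. Since $\fC_p$ is itself an operator ideal, $H_f\in\fC_p$ forces $\cJ(H_f)\subseteq\fC_p$; but by Theorem \ref{T:appl} we have $\cJ(H_f)=\cJ(H_{1/f})$, and the latter contains $H_{1/f}$, whence $H_{1/f}\in\fC_p$. The reverse implication follows by exchanging the roles of $f$ and $1/f$. Chaining this with the two applications of Peller's theorem yields the first equivalence.

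For the second equivalence I would proceed identically, using the ``likewise'' part of Theorem \ref{T:appl} for $\wtilH_f$ and $\wtilH_{1/f}$, after recording that under the identification of $K^p$ with $H^p$ these are the Hankel operators of $\wtil f$ and of $1/\wtil f$; the only computation needed is that $1/\wtil f=\wtil{(1/f)}$, which is immediate from $\wtil f(z)=\overline{f(\overline z)}$. Peller's theorem applied to $\wtil f$ and to $1/\wtil f$ then turns the Besov statements into $\wtilH_f\in\fC_p\Leftrightarrow\wtilH_{1/f}\in\fC_p$, which the ideal argument of the previous paragraph settles. The only point requiring a little care is to be sure $\fC_p$ is genuinely an operator ideal in the Banach-space (approximation-number) sense on the spaces at hand, rather than merely a class of Hilbert-space operators, so that the implication $T\in\fC_p\Rightarrow\cJ(T)\subseteq\fC_p$ is valid; with $\fC_p$ defined through $p$-summability of approximation numbers this is standard.
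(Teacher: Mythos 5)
Your proposal is correct and is essentially the paper's own argument: the paper proves the corollary in one line by combining Peller's theorem (translating $Pg\in B_p^{1/p}$ into $H_g\in\fC_p$, with $\fC_p$ defined via $p$-summable approximation numbers so that it is an operator ideal in the Banach-space sense) with Theorem~\ref{T:appl}, exactly as you do. Your write-up merely makes explicit the routine ideal-containment step ($H_f\in\fC_p\Rightarrow\cJ(H_f)=\cJ(H_{1/f})\ni H_{1/f}$) and the identity $1/\wtil f=\wtil{(1/f)}$, both of which the paper leaves tacit.
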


Let $\cW^+=\cW\cap H^\infty$. By another application of Wiener's $1/f$ theorem, for $f\in \cW^+$ we have $1/f \in \cW^+$ if and only if $f(z)\neq 0$ for all $z$ in the closed complex unit disk $\overline{\BD}$. This leads to the following corollary.

\begin{corollary}\label{C2}
Let $f\in \cW^+$ such that $f(z)\neq 0$ for all $z\in\overline{\BD}$. Then $H_f$ and $H_{1/f}$ generate the same operator ideal. In particular, $f \in B_p^{1/p}$ if and only if $1/f \in B_p^{1/p}$.
\end{corollary}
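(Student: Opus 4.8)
The plan is to deduce both assertions directly from Theorem~\ref{T:appl} and Corollary~\ref{C1}, the only genuinely new ingredient being the observation that on $\cW^+=\cW\cap H^\infty$ the Riesz projection $P$ acts as the identity. First I would verify that the hypotheses of Theorem~\ref{T:appl} are in force. Since $\BT\subseteq\overline{\BD}$, the assumption $f(z)\neq 0$ for all $z\in\overline{\BD}$ implies in particular that $f(z)\neq 0$ for all $z\in\BT$, while $f\in\cW^+\subseteq\cW$. Hence Theorem~\ref{T:appl} applies verbatim and yields at once that $H_f$ and $H_{1/f}$ generate the same operator ideal, which is the first claim.

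For the ``in particular'' statement I would exploit the analyticity encoded in membership of $\cW^+$. A function in $H^\infty$ has all negatively indexed Fourier coefficients equal to zero, so it already lies in $H^p$ and $P$ leaves it unchanged; thus $Pf=f$. Moreover, by the application of Wiener's $1/f$ theorem recalled immediately before the corollary, the hypothesis $f(z)\neq 0$ on $\overline{\BD}$ guarantees $1/f\in\cW^+$, and the same reasoning then gives $P(1/f)=1/f$.

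It remains to invoke Corollary~\ref{C1}, whose hypotheses are met because $f\in\cW$ with $f\neq 0$ on $\BT$. That corollary asserts $Pf\in B_p^{1/p}$ if and only if $P(1/f)\in B_p^{1/p}$; substituting the identities $Pf=f$ and $P(1/f)=1/f$ produces the desired equivalence $f\in B_p^{1/p}$ if and only if $1/f\in B_p^{1/p}$. I do not anticipate a real obstacle here: the whole argument is a short specialization of the $L^p$ results already established, and the only point requiring any care is the verification that the Riesz projection reduces to the identity on $\cW^+$, which is immediate from the Hardy-space description of $H^\infty$.
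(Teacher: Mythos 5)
Your argument is correct and is exactly the route the paper intends: the paper gives no written proof of this corollary, merely noting beforehand (via Wiener's $1/f$ theorem on $\cW^+$) that $1/f\in\cW^+$, after which the claim follows from Theorem~\ref{T:appl} and Corollary~\ref{C1} together with the observation $Pf=f$ and $P(1/f)=1/f$ for functions in $\cW^+$. Your write-up simply makes these implicit steps explicit, including the correct verification that $f\neq 0$ on $\overline{\BD}$ gives both the hypothesis $f\neq 0$ on $\BT$ and the analyticity of $1/f$.
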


Theorem \ref{T:appl} easily extends to matrix- and operator-valued functions. We also expect this result, and possibly the corollaries, to carry over to other domains such as the unit sphere in $\BC^d$, cf., \cite{FX09}, and Wiener-Hopf and Toeplitz integral operators on the half line, c.f., \cite{P88,P03,BS06}, but will not work out the details here.

\begin{remark}\label{R:EquivAS}
In case $p=2$ more can be concluded from the fact that $H_f$ and $H_{1/f}$ are EAE. Indeed, let $p=2$ and let $\al_n \downarrow 0$ and $\be_n\downarrow0$ be the sequences of singular values of the Hankel operators $H_f$ and $H_{1/f}$, respectively, multiplicity taken into account. By Theorem 6.3 in \cite{T14}, $H_f$ and $H_{1/f}$ being EAE implies that the sequences $(\al_n)$ and $(\be_n)$ are comparable after a shift, that is, there exists a positive integer $k$ and a $c>0$ such that
\[
c< \frac{\al_n}{\be_{n+k}} <1/c\ \ (n\in\BN)\quad \mbox{or}\quad
c< \frac{\be_n}{\al_{n+k}} <1/c\ \ (n\in\BN).
\]
For $p\neq 2$ it is at present not clear whether $H_f$ and $H_{1/f}$ EAE implies that their approximation numbers (or other $s$-numbers) must be comparable after a shift. However, in Proposition 3.3 of \cite{tHMR15} it is shown that there is a positive integer $k$ such that uniform lower bounds for $\al_n/\be_{n+k}$ and  $\be_n/\al_{n+k}$ exist.
\end{remark}

\paragraph{\bf Acknowledgments}
This work is based on research supported in part by the National Research Foundation of South Africa and the DST-NRF Centre of Excellence in Mathematical and Statistical Sciences (CoE-MaSS). Any opinion, finding and conclusion or recommendation expressed in this material is that of the authors and the NRF does not accept any liability in this regard. Opinions expressed and conclusions arrived at are those of the author and are not necessarily to be attributed to the CoE-MaSS.

The authors thank Niels Laustsen for his colloquium talk at NWU on February 1, 2017, and the subsequent discussion in which he brought the topic of `Banach spaces with (very) few operators' to their attention.


\end{document}